\newtheorem{theorem}{Theorem}
\newtheorem{lemma}{Lemma}
\newtheorem{corollary}{Corollary}
\newtheorem{proposition}[lemma]{Proposition}
\newtheorem{definition}{Definition}
\def\N{{\mathbb N}}
\def\Q{{\mathbb Q}}
\def\R{{\mathbb R}}
\def\\{\cr}
\def\({\left(}
\def\){\right)}
\def\[{\left[}
\def\]{\right]}
\def\<{\langle}
\def\>{\rangle}
\newcommand{\ignor}[1]{}
\def\=#1{\mathop{=}\limits_{#1}}
\def\ev#1{\mathop{ev}_{#1}}
\def\gcd{\mathop{gcd}}
\def\1{\mathbf{1}}
\def\lt{\mathrm{lt}}
\def\lm{\mathrm{lm}}
\def\Lm{\mathrm{Lm}}
\begin{document}
\title{A proof of Hilbert's Nullstellensatz based on Gr\"obner bases}
\author{L.Glebsky}
\maketitle  
\begin{abstract}
The aim of this note is to present an easy proof of Hilbert's Nullstellensatz using Gr\"obner 
basis.
I believe, that the proof has some methodical advantage in a course on Gr\"obner bases.

{\bf Key words:} Hilbert's Nullstellensatz, Gr\"obner bases. 
{\bf AMS} classification: 13-01, 13P10 
\end{abstract}
\section{Introduction and main results}

The aim of this note is to present an easy proof of the Hilbert's Nullstellensatz using 
Gr\"obner bases.
The prove presented here may be not shorter or simpler then one given in \cite{Arrondo},
however, I believe, it has some methodical advantage in a course on Gr\"obner bases.
Other proofs using Gr\"obner bases were published in \cite{Almira} and \cite{Kreuzer}. 
The proof presented in \cite{Arrondo} uses  the resultant as the main tool.
It leads to some duality between our proof and the proof of \cite{Arrondo} that will be 
explained at the end of the section.

Our proof is a sequence of propositions each of them is a good exercise on Gr\"obner bases.
As the strong Hilbert's Nullstellensatz follows from the weak one by the Rabinowitz trick, 
we prove only

\begin{theorem}[Hilbert's Nullstellensatz (weak)]
Let $k$ be an algebraically closed field. Then any nontrivial ideal 
$I\subsetneq k[x_1,\dots,x_n]$ has a solution $a\in k^n$ (that is $f(a)=0$ for any $f\in I$). 
\end{theorem}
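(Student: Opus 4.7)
The plan is to proceed by induction on the number of variables $n$, using a lexicographic Gröbner basis to reduce to the one-variable case and then lift partial solutions.

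For the base case $n=1$, the ring $k[x_1]$ is a PID, so a proper ideal $I$ is generated by a single nonconstant polynomial, which has a root in $k$ because $k$ is algebraically closed.

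For the inductive step, I would first perform a (generic) linear change of coordinates placing $I$ in a convenient position. Then compute a reduced Gröbner basis $G$ of $I$ with respect to the lexicographic order $x_1 > x_2 > \cdots > x_n$. By the Elimination Theorem, $G \cap k[x_n]$ generates the elimination ideal $I \cap k[x_n]$. After the initial coordinate change, this elimination ideal is nonzero, so its generator is a polynomial $g(x_n)$; it is nonconstant because $I$ is proper, hence has a root $a_n \in k$ by algebraic closedness. Substituting $x_n = a_n$ into the remaining elements of $G$ yields generators of an ideal $\bar I \subset k[x_1,\dots,x_{n-1}]$, and the inductive hypothesis produces $(a_1,\dots,a_{n-1})\in k^{n-1}$ vanishing on $\bar I$. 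Then $(a_1,\dots,a_n)$ is a common zero of $I$, because every $f\in I$ reduces to $0$ modulo $G$, and after substituting $x_n=a_n$ it reduces to $0$ modulo the generators of $\bar I$, which vanish at $(a_1,\dots,a_{n-1})$.

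The main obstacle is showing that the specialized ideal $\bar I$ is proper, i.e.\ that $1 \notin \bar I$. If specialization were uncontrolled, leading coefficients of basis elements (regarded as polynomials in $x_1,\dots,x_{n-1}$ with coefficients in $k[x_n]$) could collapse at $x_n = a_n$ and produce a unit. This is precisely the content of the Extension Theorem in elimination theory, and it is the reason for the preliminary linear change of variables: the change must be arranged so that, for some element of $G$, the leading coefficient in the inner variables is a nonzero \emph{constant} in $k[x_n]$, or more generally does not vanish at the chosen $a_n$. I expect this preparatory step — phrased as a statement that after a generic linear substitution the lex Gröbner basis has the desired shape — to be the key lemma, and the one where the Gröbner-basis machinery really does the work. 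The remaining ingredients (the Elimination Theorem, reducedness of $G$, and the reduction-to-zero characterization of ideal membership) are standard Gröbner-basis exercises, matching the author's stated intent.
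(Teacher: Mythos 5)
Your overall architecture matches the paper's: induct on $n$, eliminate down to a single variable, pick a root, specialize, and recurse. But the mechanism you propose for the crucial step --- that a generic linear change of coordinates plus the Extension Theorem guarantee the specialized ideal $\bar I$ is proper --- does not work, and this is precisely where the paper does something genuinely different.

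The difficulty is structural, not merely technical. View $k[x_1,\dots,x_n]$ as $k[x_n][x_1,\dots,x_{n-1}]$ and let $\Gamma$ be a (strong) Gr\"obner basis over the PID $k[x_n]$. If the elimination ideal $I\cap k[x_n]=\langle g\rangle$ is nonzero, then $\Gamma$ must contain an associate of $g$, and as a ``polynomial in the inner variables over $k[x_n]$'' that element has $g$ itself as its leading coefficient. So the product of leading coefficients is divisible by $g$ and vanishes at \emph{every} root $a_n$ of $g$ --- exactly the points you want to specialize at. No linear change of variables can remove this, because it is forced by $g\in I\cap k[x_n]$. Moreover, the Extension Theorem you invoke is a one-variable lift (from $V(I\cap k[x_{l+1},\dots,x_n])$ to $V(I\cap k[x_l,\dots,x_n])$); to conclude that $\bar I=\ev{a_n}(I)\subset k[x_1,\dots,x_{n-1}]$ is proper you would need $n-1$ such lifts, each with its own non-vanishing hypothesis, none of which is arranged by a single upfront generic substitution. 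So the ``key preparatory lemma'' you anticipate cannot be proved in the form you state.

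The paper sidesteps this in two ways. First, it splits on whether $I\cap k[x_1]$ is zero or not. When it is zero, the leading-coefficient argument you had in mind \emph{does} work verbatim (the paper's Lemma~\ref{lemma_base2} and Proposition~\ref{prop_morphism}): no element of $\Gamma$ lies in $k[x_1]$, the product $q$ of leading coefficients is a nonzero polynomial, and since $k$ is infinite one chooses $a$ with $q(a)\neq 0$. This removes any need for a coordinate change. Second, when $I\cap k[x_1]=\langle p\rangle$ with $p$ nonconstant, the paper proves (Lemma~\ref{lemma_base1}) that \emph{some} root of $p$ yields a proper specialization by an entirely different argument: factor $p=\prod_j(x_1-a_j)^{c_j}$, use Proposition~\ref{prop_1} to write $I=\bigcap_j\langle (x_1-a_j)^{c_j},G\rangle$, conclude at least one factor is proper, and pass to the radical to drop the exponent. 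Nothing about leading coefficients or generic position is involved. If you do want to run the Extension Theorem after a generic coordinate change, the clean way is to reverse the induction as in Arrondo's proof, which the paper explicitly contrasts with: normalize so that $I$ contains a polynomial monic in $x_1$, solve $I\cap k[x_2,\dots,x_n]$ by induction in $n-1$ variables, and then lift by a single application of the Extension Theorem whose leading coefficient is the constant $1$.
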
 

It turns out that for our exposition it is more natural to use Gr\"obner bases not only for 
polynomials over a field $k$ but also over a ring  $k[x]$ of polynomials in one variable. 
It allows us to consider $k[x_1,x_2,\dots,x_n]$ as $k[x_1][x_2,\dots,x_n]$ and write a short
proof for Lemma~\ref{lemma_base2}. On the other hand, $k[x]$ is an Euclidean domain, particularly,
a principle ideal domain (PID). The theory of Gr\"obner bases for polynomials over a PID
is almost the same as for polynomials over a field: one can use the same reduction process, 
Buchberger's algorithm, etc.,  see, \cite{Adams}. Particularly, it allows us to find the 
polynomial $q$ of Lemma~\ref{lemma_base2} constructively, that provides us a constructive proof
of the weak Hilbert's Nullstellensatz. In the present exposition we prove the existence
of a solution for a nontrivial ideal only and not discuss the constructivity. 
The only  facts about Gr\"obner bases we use without proof are contained in 
Proposition~\ref{prop_Groebner_basic}. Proposition~\ref{prop_Groebner_basic}
seems to be more elementary than the Buchberger's algorithm and can be proved 
using the Dickson lemma, see \cite{Cox}.  

First of all we need some notations.
Let $k$ be a field, $a\in k$. Let $\ev{a}:k[x_1,x_2,\dots,x_n]\to k[x_2,\dots,x_n]$ denote 
the evaluation homomorphism
$\ev{a}:f(x_1,x_2,\dots,x_n)\to f(a,x_2,...,x_n)$.
The proof is based on the following  lemmas.
\begin{lemma}\label{lemma_base1}
Let $k$ be an algebraically closed field,  $I\subseteq k[x_1,\dots,x_n]$ be an ideal,
such that $I\cap k[x_1]=\langle p \rangle$ and $p\in k[x]\setminus k$. Then
there exists $a\in k$, $p(a)=0$ such that $\ev{a}(I)\neq k[x_2,\dots,x_n]$.
\end{lemma}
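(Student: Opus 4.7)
The plan is to argue by contradiction: assume that $\ev{a}(I) = k[x_2,\dots,x_n]$ for \emph{every} root $a \in k$ of $p$, and derive $1 \in I$. Since $p$ is non-constant, $\langle p \rangle \subsetneq k[x_1]$, so $1 \in I$ would force $1 \in I \cap k[x_1] = \langle p \rangle$, a contradiction.

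Because $k$ is algebraically closed, I first factor $p(x_1) = c \prod_{i=1}^r (x_1 - a_i)^{e_i}$ with the $a_i$ distinct. For each $i$, the working hypothesis provides some $f_i \in I$ with $\ev{a_i}(f_i) = 1$. Viewing $k[x_1,\dots,x_n]$ as $k[x_2,\dots,x_n][x_1]$ and performing Euclidean division by $x_1 - a_i$ in the variable $x_1$ shows that $\ker(\ev{a_i}) = \langle x_1 - a_i \rangle$. Consequently $x_1 - a_i$ divides $1 - f_i$ in $k[x_1,\dots,x_n]$, and therefore $(x_1 - a_i)^{e_i}$ divides $(1 - f_i)^{e_i}$.

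Multiplying over $i$ then yields $p(x_1)/c \mid F$, where $F := \prod_{i=1}^r (1 - f_i)^{e_i}$; hence $F \in \langle p \rangle \subseteq I$. On the other hand, expanding the product as a polynomial in $f_1,\dots,f_r$ gives $F = 1 + g$ with $g$ a sum of monomials each containing some $f_j$ as a factor, so $g \in I$. Thus $1 = F - g \in I$, the desired contradiction.

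The step I expect to require the most care is handling the possibly repeated roots of $p$, which matters in positive characteristic where $p$ may be inseparable. Raising each factor $1 - f_i$ to its multiplicity $e_i$ — rather than taking one copy per distinct root — is the key manoeuvre that lets $p$ divide $F$ without any separability hypothesis. Everything else is essentially the standard \emph{partition of unity} trick on the fiber $\{p = 0\}$, packaged so that only division in $k[x_1,\dots,x_n]$ (no Gr\"obner machinery) is actually invoked.
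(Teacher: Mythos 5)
Your proof is correct, and it takes a genuinely different route from the paper's. You argue directly: assuming $\ev{a_i}(I)=k[x_2,\dots,x_n]$ at every root $a_i$ of $p$, you pick $f_i\in I$ with $\ev{a_i}(f_i)=1$, note $\ker(\ev{a_i})=\langle x_1-a_i\rangle$ (Euclidean division in $x_1$), raise to the multiplicities so that $p\mid\prod(1-f_i)^{e_i}=:F$, and then read off $1=F-(F-1)\in I$ since $F\in\langle p\rangle\subseteq I$ and $F-1$ is a polynomial combination of the $f_i$ with zero constant term. The divisibility step $\prod(x_1-a_i)^{e_i}\mid F$ does need the observation that the factors $(x_1-a_i)^{e_i}$ are pairwise coprime in the UFD $k[x_1,\dots,x_n]$, but that is immediate and you supply the key point (raising to $e_i$) that makes it work with inseparable $p$. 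The paper instead proves an ideal-intersection identity $\langle f_1f_2,G\rangle=\langle f_1,G\rangle\cap\langle f_2,G\rangle$ for coprime $f_1,f_2\in k[x_1]$ via the $z$-elimination trick, deduces $\langle\prod(x_1-a_j)^{c_j},G\rangle=\bigcap_j\langle(x_1-a_j)^{c_j},G\rangle$, passes to radicals to drop the exponent, and finishes with the quotient isomorphism $k[x_1,\dots,x_n]/\langle x_1-a,G\rangle\cong k[x_2,\dots,x_n]/\ev{a}(\langle G\rangle)$. Your argument is shorter and more self-contained — it uses only division in $x_1$ and elementary ideal manipulations, no elimination or radicals — whereas the paper's route isolates the intersection formula and the radical step as reusable, pedagogically interesting lemmas in the spirit of the note. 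Both correctly handle repeated roots, yours by exponentiating $1-f_i$, the paper's by the radical argument.
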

The following lemma is valid for any field.
\begin{lemma}\label{lemma_base2}
Let $k$ be a field, $I\subseteq k[x_1,\dots,x_n]$ be an ideal,
such that $I\cap k[x_1]=\{0\}$. Then there exists a non-zero polynomial $q\in k[x_1]$
such that $\ev{a}(I)\neq k[x_2,\dots,x_n]$ for any $a\in k$, $q(a)\neq 0$.
\end{lemma}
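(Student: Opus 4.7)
My plan follows the hint in the introduction: regard $k[x_1,\dots,x_n]$ as $R[x_2,\dots,x_n]$ over the PID $R = k[x_1]$, take a Gr\"obner basis of $I$ there, and read off $q$ from the leading data. Fix any monomial order on the monomials in $x_2,\dots,x_n$, and let $G = \{g_1,\dots,g_s\}$ be a Gr\"obner basis of $I$ inside $R[x_2,\dots,x_n]$. Write the leading term of each $g_i$ as $c_i(x_1)\,M_i$, where $c_i \in R \setminus \{0\}$ and $M_i$ is a monomial in $x_2,\dots,x_n$. The hypothesis $I \cap k[x_1] = \{0\}$ forces $M_i \neq 1$ for every $i$: otherwise $g_i$ would lie in $R$ and give a non-zero element of $I \cap k[x_1]$.

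Next, I would take $q(x_1) \in k[x_1]$ to be the product of the $c_i(x_1)$ together with the (finitely many) denominators and relevant leading coefficients in $R$ appearing in the standard reductions $S(g_i,g_j) = \sum_k r_{ijk}\,g_k$ of each S-polynomial over $K[x_2,\dots,x_n]$, where $K = k(x_1)$; this $q$ is non-zero. Fix $a \in k$ with $q(a) \neq 0$ and suppose, for contradiction, that $\ev{a}(f) = 1$ for some $f \in I$. Because $G$ generates $I$, write $f = \sum_i h_i g_i$ with $h_i \in R[x_2,\dots,x_n]$; applying $\ev{a}$ yields $1 = \sum_i \ev{a}(h_i)\,\ev{a}(g_i)$, so $1$ lies in the ideal $J_a := \langle \ev{a}(g_1),\dots,\ev{a}(g_s)\rangle \subseteq k[x_2,\dots,x_n]$.

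To finish, I would verify via Buchberger's criterion that $\ev{a}(G)$ is itself a Gr\"obner basis of $J_a$ over $k$. Since $c_i(a) \neq 0$, each $\ev{a}(g_i)$ still has leading monomial $M_i$; and since the denominators and leading-coefficient data entering each reduction $S(g_i,g_j) = \sum_k r_{ijk}\,g_k$ are non-zero at $a$, those identities specialize to standard representations of $S(\ev{a}(g_i),\ev{a}(g_j))$ in $k[x_2,\dots,x_n]$, so each S-polynomial reduces to $0$ modulo $\ev{a}(G)$. The leading-monomial ideal of $J_a$ is then $\langle M_1,\dots,M_s\rangle$, and since each $M_i \neq 1$ this ideal cannot contain the constant $1$. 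Contradiction.

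The main technical obstacle is this last step: the transfer of the Gr\"obner basis property from $G$ to its specialization $\ev{a}(G)$. Both failure modes --- vanishing of leading coefficients of the $g_i$ and vanishing of denominators (or leading coefficients) in the finitely many S-polynomial reductions --- involve only finitely many non-zero polynomials in $k[x_1]$, and all of them can be packaged into the single non-zero $q \in k[x_1]$ required by the lemma.
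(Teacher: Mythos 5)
Your proposal is correct and follows the same broad outline as the paper --- view $k[x_1,\ldots,x_n]$ as $k[x_1][x_2,\ldots,x_n]$, take a Gr\"obner basis $G$ there, let $q$ absorb the leading coefficients, and show that $\ev{a}(G)$ remains a Gr\"obner basis with $\ev{a}(G)\cap k=\emptyset$ --- but the key technical step is genuinely different. You verify that the specialization $\ev{a}(G)$ is a Gr\"obner basis by checking Buchberger's S-polynomial criterion: you pass to the fraction field $K=k(x_1)$, record standard representations $S(g_i,g_j)=\sum_k r_{ijk}g_k$, and build $q$ from both the leading coefficients $c_i$ \emph{and} the finitely many denominators and coefficients in those representations, so that everything specializes at any $a$ with $q(a)\neq 0$. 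The paper instead invokes Proposition~\ref{prop_morphism}: if $\Gamma$ is a \emph{strong} Gr\"obner basis of $I\subseteq R_1[x_2,\ldots,x_n]$ over a PID $R_1$ and $\phi:R_1\to R_2$ is a surjection under which no $\lm(g)$, $g\in\Gamma$, maps to $0$ or a zero divisor, then $\phi(\Gamma)$ is a strong Gr\"obner basis of $\phi(I)$. That proposition is proved directly by a short minimality argument, with no S-polynomials and no passage to $k(x_1)$, and it yields the simpler $q=\prod_i c_i$.

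What each approach buys: your route requires only Gr\"obner basis theory over a field (plus a choice of reductions over $K=k(x_1)$ to read off denominators), which is the version most courses develop first, at the cost of having $q$ depend on auxiliary data from the reductions and of some bookkeeping to check that the leading-monomial bounds $\lm(r_{ijk}g_k)\prec\mathrm{lcm}(M_i,M_j)$ survive specialization. The paper's route requires the PID notion of strong Gr\"obner basis, but then the morphism lemma is cleaner, $q$ is canonical, and the argument avoids S-polynomials entirely. One small streamlining worth noting in your write-up: there is no need for the contradiction scaffolding with $f\in I$ and $\ev{a}(f)=1$; since $G$ generates $I$ over $k[x_1]$, one has $\ev{a}(I)=\langle\ev{a}(g_1),\ldots,\ev{a}(g_s)\rangle=J_a$ directly, and then the observation that $\langle M_1,\ldots,M_s\rangle$ omits $1$ finishes the argument immediately.
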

\begin{corollary}\label{cor_base}
Let $k$ be an infinite field, $I\subseteq k[x_1,\dots,x_n]$ be an ideal,
such that $I\cap k[x_1]=\{0\}$. Then $\ev{a}(I)\neq k[x_2,\dots,x_n]$ for some
$a\in k$.
\end{corollary}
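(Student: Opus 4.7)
The plan is to reduce Corollary~\ref{cor_base} immediately to Lemma~\ref{lemma_base2}, using the hypothesis that $k$ is infinite. Since $I\cap k[x_1]=\{0\}$, Lemma~\ref{lemma_base2} applies and produces a nonzero polynomial $q\in k[x_1]$ with the property that $\ev{a}(I)\neq k[x_2,\dots,x_n]$ whenever $q(a)\neq 0$.

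The only remaining step is to observe that such an $a$ exists. Since $q\in k[x_1]$ is a nonzero univariate polynomial, it has at most $\deg q$ roots in $k$. As $k$ is infinite, the set $\{a\in k : q(a)\neq 0\}$ is nonempty; pick any $a$ from this set. By the conclusion of Lemma~\ref{lemma_base2}, this $a$ witnesses $\ev{a}(I)\neq k[x_2,\dots,x_n]$.

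There is no real obstacle here: the work is hidden inside Lemma~\ref{lemma_base2}, and the corollary is simply the observation that over an infinite field one can always avoid the finite "bad set" of roots of $q$. The only point worth flagging is exactly why infiniteness is needed (over a finite $k$, $q$ could vanish on all of $k$, e.g., $q(x)=\prod_{c\in k}(x-c)$, and the conclusion would fail in general).
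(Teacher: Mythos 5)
Your proof is correct and is exactly the argument the paper intends: apply Lemma~\ref{lemma_base2} to get a nonzero $q\in k[x_1]$, then use the infiniteness of $k$ to find $a$ with $q(a)\neq 0$. The paper states this corollary without an explicit proof precisely because it is this immediate observation.
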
 

It is clear that Lemma~\ref{lemma_base1} and Corollary~\ref{cor_base} imply the (weak)
Hilbert's Nullstellensatz by induction. The duality with the proof of \cite{Arrondo} is that 
in \cite{Arrondo} the induction goes the other direction. Precisely, in \cite{Arrondo} 
the following statement is proved. Let $I\subsetneq k[x_1,x_2,\dots,x_n]$ be an ideal.  
After some change of variables, if $(a_2,a_3,\dots,a_n)$ is a solution to 
$I\cap k[x_2,\dots,x_n]$ then $\{f(x,a_2,\dots,a_n)\;|\;f\in I\}\neq k[x]$.   

\section{Gr\" obner bases and some construction}
This section is a short introduction to Gr\"obner bases. I include it in order to make the 
exposition reasonably closed. For details one may consult \cite{Adams,Cox,Kreuzer}.

In what follows $R$ denotes a ring with unity.
An expression of the form $\alpha x_1^{k_1}x^{k_2}_2\dots x_n^{k_n}$ with $\alpha\in R$ and 
$k_i\in \N$ we call monomial. An expression of the form $x_1^{k_1}x^{k_2}_2\dots x_n^{k_n}$ or $1$
we call term. So, a polynomial in $R[x_1,x_2,\dots,x_n]$ is a sum of monomials or an 
$R$-linear combination of terms.
\begin{definition}
A total order $\preceq$ of terms is said to be a term order if $1\preceq t$ and 
$t_1\preceq t_2$ implies $tt_1\preceq tt_2$ for any terms $t,t_1,t_2$. 
\end{definition}
For example, the lexicographic order is a term order. 
Another interesting term order: let $(\alpha_1,\alpha_2,\dots\alpha_n)\in\R^n$ be 
independent over $\Q$. Then the map 
$x_1^{k_1}x_2^{k_2}\dots x_n^{k_n} \to \alpha_1k_1+\alpha_2k_2+\dots+\alpha_nk_n$ is injective and
induces a term order.
   
In what follows we assume that some term order is
fixed. Let $\lt(f)$ be a leading term of $f$ (with respect to the fixed term order).
Let $\lm(f)$ be the leading monomial of $f$ ($\lt(\lm(f))=\lt(f)$). Let 
$\Lm(I)=\{\lm(f)\;|\;f\in I\}$.

\begin{definition}
Let $I\subset R[x_1\dots x_n]$ be an ideal. 
$\Gamma\subset I\setminus \{0\}$ is called a strong Gr\"obner basis
for $I$ if for any $m\in\Lm(I)$ there exists $g\in \Gamma$ such that $\lm(g)\,|\,m$.
\end{definition} 

A Gr\"obner basis is a generating set of an ideal and has several nice properties.
\begin{proposition}\label{prop_Groebner_basic}
Let $R$ be a PID.
Then for any ideal $I\subseteq R[x_1,\dots,x_n]$ there exists a finite strong Gr\"obner basis. 
If $\Gamma$ is a strong Gr\"obner basis
for $I$ then
\begin{itemize}
\item $I=\langle\Gamma\rangle$ ($\Gamma$ generates $I$); 
\item If $R=k$ is a field then $I$ is trivial ($I=k[x_1,\dots,x_n]$) if and only if 
$\Gamma\cap k\neq\emptyset$.
\end{itemize}
\end{proposition}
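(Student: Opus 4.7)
The statement bundles three claims—existence of a finite strong Gr\"obner basis over the PID $R$, the generating property, and the triviality criterion over a field—which I would address in increasing order of difficulty.

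\emph{Generating property.} Given $f\in I$ I would run the standard reduction: since $\lm(f)\in\Lm(I)$, some $g\in\Gamma$ has $\lm(g)$ dividing $\lm(f)$ in the monomial ring, so one can write $\lm(f)=m\cdot\lm(g)$ for a monomial $m$ and replace $f$ by $f-mg\in I$, whose leading term is strictly smaller (or which vanishes). The fixed term order is a well-ordering on terms (a consequence of the term-order axioms together with Dickson's lemma, applied to any hypothetical infinite descending chain), so the process terminates in $0$, exhibiting $f$ as an $R[x_1,\dots,x_n]$-linear combination of $\Gamma$.

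\emph{Triviality criterion.} If $I=k[x_1,\dots,x_n]$ then $1\in I$, so $1=\lm(1)\in\Lm(I)$ and some $g\in\Gamma$ satisfies $\lm(g)\mid 1$. Over a field this forces $g\in k\setminus\{0\}$, hence $\Gamma\cap k\ne\emptyset$. Conversely, any nonzero scalar in $\Gamma\subseteq I$ immediately trivializes $I$.

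\emph{Existence of a finite strong Gr\"obner basis.} This is the main obstacle and is where both hypotheses (Dickson plus PID) really bite. My plan is the following. First, Dickson's lemma applied to the set of terms $T=\{t:\exists c\in R\setminus\{0\},\ ct\in\Lm(I)\}$ yields finitely many minimal elements $t_1,\dots,t_N$ under divisibility. For each term $t$ the set $R_t=\{c\in R:ct\in\Lm(I)\}\cup\{0\}$ is an ideal of $R$: closure under addition uses that if $c_1t$ and $c_2t$ are leading monomials of $f_1,f_2\in I$, then $c_1+c_2$ is either zero or the leading coefficient of $f_1+f_2$; closure under $R$-scaling is similar. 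Since $R$ is a PID, $R_{t_i}=(c_i)$, and one fixes $f_i\in I$ with $\lm(f_i)=c_i t_i$. The delicate point is that for a non-minimal term $t$ with $t_i\mid t$ one may still have $(c_i)\subsetneq R_t$, so the initial candidate $\Gamma=\{f_1,\dots,f_N\}$ need not be strong. I would repair this iteratively: whenever some $ct\in\Lm(I)$ fails to be divisible by any $\lm(g)$, $g\in\Gamma$, take a PID combination (the analog of a Buchberger $S$-polynomial, formed via B\'ezout on the leading coefficients) of the relevant $f_i$ and witness to produce an element of $I$ with strictly smaller leading coefficient at $t$, and add it to $\Gamma$. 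Since at each of the finitely many anchor terms supplied by Dickson the chain of coefficient ideals produced this way is ascending in $R$ and $R$ satisfies the ACC, the enlargement terminates in finitely many steps, yielding a finite strong Gr\"obner basis; see \cite{Cox} for the details.
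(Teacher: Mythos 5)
The paper does not actually prove this proposition: it is explicitly the one piece of Gr\"obner-basis background taken on faith, with a pointer to Dickson's lemma and the references. So there is no in-paper argument to compare against, and you are supplying more than the author does. Your treatments of the generating property and of the triviality criterion are correct and standard: the division step plus the well-ordering of terms (which, as you note, follows from the term-order axioms together with Dickson applied to a hypothetical infinite descending chain) handles the first, and the observation that $\lm(g)\mid 1$ forces $g$ to be a nonzero scalar handles the second.

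The existence argument has the right ingredients (Dickson on terms, ACC in the PID) but the termination step as you phrase it does not hold up. You are right that each $R_t=\{c:ct\in\Lm(I)\}\cup\{0\}$ is an ideal of $R$, and right that the na\"ive $\Gamma=\{f_1,\dots,f_N\}$ built from the Dickson-minimal terms $t_i$ with $\lm(f_i)=c_it_i$, $(c_i)=R_{t_i}$, need not be strong, since $R_t$ can strictly contain $(c_i)$ for non-minimal $t$ above $t_i$. But the repairs you then perform occur at those non-minimal terms $t$ --- of which there are infinitely many --- whereas the chains you invoke to force termination are indexed by the finitely many anchors $t_i$, and at each anchor nothing changes: $c_i$ was already chosen to generate $R_{t_i}$. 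So ``the chain of coefficient ideals at the anchors is ascending and hence stabilizes'' does not control the construction at all. A correct termination argument runs differently: suppose the process never stops, producing $g_1,g_2,\dots\in I$, each $g_m$ chosen with $\lm(g_m)=c_{s_m}s_m$ where $(c_{s_m})=R_{s_m}$ and with $\lm(g_j)\nmid\lm(g_m)$ for all $j<m$; Dickson's lemma (iterated, in the form that from any infinite sequence of terms one can extract an infinite subsequence $s_{j_1}\mid s_{j_2}\mid\cdots$) then yields indices $j_1<j_2<\cdots$ for which, since $R_{s_{j_\ell}}\subseteq R_{s_{j_{\ell+1}}}$ but $c_{s_{j_\ell}}\nmid c_{s_{j_{\ell+1}}}$, the ideals $(c_{s_{j_\ell}})$ form a strictly ascending chain in $R$, contradicting ACC. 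Something of this shape --- or an outright appeal to the Noetherianity of $R[x_1,\dots,x_n]$ --- is what you need; the version you wrote is a genuine gap.
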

Let $\phi:R_1\to R_2$ be a morphism of rings $R_1$ and $R_2$. It has the natural lift to 
the morphism $\phi:R_1[x_1,\dots,x_n]\to R_2[x_1,\dots,x_n]$. 
\begin{proposition}\label{prop_morphism}
Let $\Gamma$ be a strong Gr\"obner basis for an ideal $I\subseteq R_1[x_1,x_2,\dots,x_n]$.
Let $\phi:R_1\to R_2$ be a surjective morphism such that $\phi(a)$ neither $0$ nor a zero 
divisor 
for any $a\in \Lm(\Gamma)$. Then $\phi(\Gamma)$ is a strong Gr\"obner basis for $\phi(I)$
\end{proposition}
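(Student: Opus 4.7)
My plan has two parts. First, I would check that leading monomials are transported by $\phi$: for each $g \in \Gamma$ with $\lm(g) = \beta t$, the hypothesis $\phi(\beta)\neq 0$ ensures $\phi(g)$ is nonzero and $\lm(\phi(g)) = \phi(\beta)\, t = \phi(\lm(g))$, since all other terms of $g$ have strictly smaller term. Hence $\phi(\Gamma)\subseteq \phi(I)\setminus\{0\}$. Second, I would establish the divisibility condition: for any nonzero $h\in\phi(I)$ with leading monomial $m=\lm(h)$, some $g\in\Gamma$ satisfies $\lm(\phi(g)) \mid m$.

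To do this, I lift $h$ to some $f\in I$ with $\phi(f)=h$ (using surjectivity of $\phi$), then reduce $f$ until its leading term survives under $\phi$. Concretely, write $\lm(f)=\alpha s$. If $\phi(\alpha)\neq 0$ we are done with the reduction, since then $\lm(\phi(f))=\phi(\alpha)s$ forces $s$ to be the term of $m$. Otherwise $\phi(\alpha)=0$, and the strong Gr\"obner basis property applied to $f\in I$ supplies $g\in\Gamma$ with $\lm(g)=\beta v$ dividing $\lm(f)$; write $s=vw$ and $\alpha=\beta\gamma$. Then $f'=f-\gamma w g\in I$ has strictly smaller leading term, and moreover $\phi(f')=h$: indeed $\phi(\beta)\phi(\gamma)=\phi(\alpha)=0$, and since $\phi(\beta)$ is not a zero divisor this forces $\phi(\gamma)=0$, so $\phi(\gamma w g)=0$. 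Iterating this reduction terminates (the term order is a well-order on terms by Dickson's lemma), and the resulting $f$ satisfies $\phi(\lm(f))=m$.

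A final application of the strong Gr\"obner basis property to this $f$ yields $g\in\Gamma$ with $\lm(g)\mid\lm(f)$, which pushes through $\phi$ to give $\lm(\phi(g))=\phi(\lm(g))\mid\phi(\lm(f))=m$, completing the proof. The main obstacle is exactly the descent step: without the non-zero-divisor hypothesis on $\phi(\beta)$ I could not cancel $\phi(\beta)$ from $\phi(\beta)\phi(\gamma)=0$ to conclude $\phi(\gamma)=0$, and the modified polynomial $f'$ would fail to satisfy $\phi(f')=h$, breaking the descent.
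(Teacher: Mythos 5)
Your proof is correct and takes essentially the same approach as the paper: the paper isolates the same key claim (every $h\in\phi(I)$ has a lift $f\in I$ with $\lt(f)=\lt(h)$) and proves it by picking a lift $f_m$ of minimal leading term and reducing its leading monomial by some $g\in\Gamma$ to reach a contradiction, where your version phrases the identical reduction step as an iterated descent terminating by well-ordering of terms. The only minor extra in your writeup is the explicit verification that $\lm(\phi(g))=\phi(\lm(g))$ for $g\in\Gamma$, which the paper uses without comment.
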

\begin{proof}
As $\phi$ is surjective, $\phi(I)$ is an ideal in $R_2[x_1,\dots,x_n]$. The proposition 
easily follows from

\bigskip
\noindent {\bf Statement.} For any $h\in \phi(I)$ there exists $f\in I\cap \phi^{-1}(h)$ such 
that $\lt(f)=\lt(h)$.

\bigskip
\noindent Indeed, in this case $\lm(h)$ is divisible by $\lm(\phi(g))=\phi(\lm(g))$ for a
$g\in\Gamma$ such that $\lm(g)|\lm(f)$.  So, it suffices to show the statement. We show it by
contradiction. Let $h\in\phi(I)$ contradict the statement, that is, for any $f\in I$, $h=\phi(f)$
one has $\lt(h)\neq\lt(f)$. Let $f_m$ has minimum leading term among all such $f$.
One has $\phi(\lm(f_m))=0$. On the other hand, we can eliminate $\lm(f_m)$ by some $g\in\Gamma$:
$f'=f_m-\frac{\lm(f_m)}{\lm(g)}g$. But $\phi(\frac{\lm(f_m)}{\lm(g)})=0$ ($\phi(\lm(g))$ is not a 
zero divisor). So, $\phi(f')=h$, contradiction with the  minimality. 
\end{proof}   
\section{Prove of Lemma~\ref{lemma_base1}}
\begin{proposition}\label{prop_1}
Let $k$ be a field, $f_1,f_2\in k[x_1]$, $G=\{g_1,g_2,\dots,g_r\}\subset k[x_1,x_2.\dots,x_n]$.
Let $\gcd(f_1,f_2)=1$. Then
$\langle f_1f_2,G \rangle = 
\langle f_1,G \rangle \cap 
\langle f_2,G\rangle$
\end{proposition}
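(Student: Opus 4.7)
The containment $\langle f_1f_2,G\rangle \subseteq \langle f_1,G\rangle \cap \langle f_2,G\rangle$ is immediate: the generator $f_1f_2$ lies in $\langle f_1\rangle$ and in $\langle f_2\rangle$, while the elements of $G$ obviously lie in both ideals on the right.

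For the reverse inclusion, the plan is to use B\'ezout's identity in the PID $k[x_1]$. Since $\gcd(f_1,f_2)=1$, there exist $u,v\in k[x_1]$ with $uf_1+vf_2=1$. Now take an arbitrary element $h\in\langle f_1,G\rangle \cap \langle f_2,G\rangle$ and write it in two ways:
\[
h \;=\; a f_1 + \sum_i b_i g_i \;=\; c f_2 + \sum_i d_i g_i,
\]
with suitable coefficients $a,c,b_i,d_i\in k[x_1,\dots,x_n]$. The key move is to expand
\[
h \;=\; h\cdot(uf_1+vf_2) \;=\; uf_1\,h \;+\; vf_2\,h,
\]
and substitute the $\langle f_2,G\rangle$-expression of $h$ into the first summand and the $\langle f_1,G\rangle$-expression into the second:
\[
uf_1 h \;=\; uc\,f_1f_2 + uf_1\sum_i d_i g_i, \qquad vf_2 h \;=\; va\,f_1f_2 + vf_2\sum_i b_i g_i.
\]
Both pieces are visibly in $\langle f_1f_2,G\rangle$, hence so is $h$.

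There is no real obstacle here; the only subtlety is knowing to multiply $h$ by the B\'ezout combination $uf_1+vf_2=1$ and to use the two different representations of $h$ in the two resulting summands so that an $f_2$ meets an $f_1$ in each term. Note that this proof does not require Gr\"obner bases at all, only that $k[x_1]$ is a PID (so that B\'ezout is available); this is consistent with the hypothesis that the $f_i$ lie in $k[x_1]$ rather than in the full polynomial ring.
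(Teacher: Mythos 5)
Your proof is correct. The easy inclusion is handled exactly right, and the reverse inclusion via $h = uf_1h + vf_2h$, substituting the $\langle f_2,G\rangle$-representation of $h$ into the first summand and the $\langle f_1,G\rangle$-representation into the second, is a clean, complete argument.

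You do, however, take a genuinely different route from the paper. The paper also begins from a B\'ezout relation $Q_1f_1+Q_2f_2=1$, but it packages the computation through the standard elimination trick $I\cap J=\langle zI,(1-z)J\rangle\cap k[x_1,\dots,x_n]$ with an auxiliary variable $z$: it shows
$\langle zf_1,(z-1)f_2, zg_i,(z-1)g_i\rangle = \langle zf_1,(z-1)f_2, g_i\rangle = \langle f_1f_2,\,Q_2f_2-z,\,g_i\rangle$
and then eliminates $z$ by the substitution $z\mapsto Q_2f_2$ to land on $\langle f_1f_2,g_i\rangle$. Your argument dispenses with the auxiliary variable entirely and works directly with two representations of a single element $h$ of the intersection. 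Your version is shorter and more elementary; the paper's version is deliberately phrased in terms of the elimination-of-variables technique for ideal intersections, which is a standard Gr\"obner-basis tool and fits the paper's stated pedagogical aim of making each step ``a good exercise on Gr\"obner bases.'' Both ultimately rest only on $k[x_1]$ being a PID, so your closing observation is accurate and applies to the paper's proof as well.
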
 
\begin{proof}
Let $Q_1,Q_2\in k[x_1]$ be such that $Q_1f_1+Q_2f_2=1$. 
We use the method $I\cap J=\langle zI, (1-z)J\rangle \cap k[x_1,\dots,x_n]$ where $z$ is a new 
variable.
Now:
$$
\langle zf_1,(z-1)f_2, zg_1,\dots,zg_r,(z-1)g_1,\dots (z-1)g_r\rangle 
\={a} \langle zf_1,(z-1)f_2, g_1,\dots,g_r \rangle \={b} 
$$
$$
\langle f_1f_2,Q_2f_2-z,g_1,\dots, g_r \rangle
$$
Equality {\bf (a)} is valid due to $g_i=zg_i-(z-1)g_i$ y $zg_i$ and $(z-1)g_i$ are multiples of 
$g_i$.
For equality {\bf (b)} it suffices to show that that $\langle zf_1, (1-z)f_2\rangle = 
\langle f_1f_2, Q_2f_2-z\rangle$.
\begin{itemize}
\item $\langle zf_1, (1-z)f_2\rangle \subseteq \langle f_1f_2, Q_2f_2-z\rangle$. Indeed, 
$zf_1=\[f_1f_2\]Q_2-\[Q_2f_2-z\]f_1$
and $(1-z)f_2=\[f_1f_2\]Q_1+\[Q_2f_2-z\]f_2$.
\item $\langle zf_1, (1-z)f_2\rangle \supseteq \langle f_1f_2, Q_2f_2-z\rangle$. Indeed, 
$f_1f_2=\[zf_1\]f_2+\[(1-z)f_2\]f_1$ and $Q_2f_2-z=\[(1-z)f_2\]Q_2-\[zf_1\]Q_1$.
\end{itemize}
Now,   $\langle f_1f_2,Q_2f_2-z,g_1,\dots, g_r \rangle\cap k[x_1,x_2,\dots,x_n]=
\langle f_1f_2,g_1,\dots, g_r \rangle$.
Indeed, let $f=pf_1f_2+p_0(Q_2f_2-z)+\sum p_ig_i$ be independent of $z$. Substituting $z=Q_2f_2$ 
we get
$f\in\langle f_1f_2,g_1,\dots, g_r \rangle$.
\end{proof}
By induction, Proposition~\ref{prop_1} implies that 
\begin{equation}\label{eq_intersec}
\langle \mathop{\Pi}_{j=1}^k(x_1-a_j)^{c_j},G\rangle =\bigcap_{j=1}^k\langle (x_1-a_j)^{c_j},G\rangle
\end{equation}
\begin{definition}
Let $I$ be an ideal. The set $\sqrt{I}=\{f\;|\;f^n\in I\; \mbox{for some }n\in\N\}$ is called 
the radical of $I$.
It is easy to check that $\sqrt{I}$ is an ideal. 
\end{definition}
\begin{proposition}
$I=k[x_1,\dots,x_n]$ if and only if $\sqrt{I}=k[x_1,\dots,x_n]$
\end{proposition}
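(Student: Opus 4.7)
The plan is to prove both implications directly from the definition of the radical, without invoking any machinery from the preceding sections.

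For the forward direction, I would note that the inclusion $I \subseteq \sqrt{I}$ is immediate from the definition: any $f \in I$ satisfies $f^1 \in I$, hence $f \in \sqrt{I}$. So if $I = k[x_1,\dots,x_n]$, then $\sqrt{I} \supseteq I = k[x_1,\dots,x_n]$, and since $\sqrt{I}$ is a subset of $k[x_1,\dots,x_n]$ by definition, equality follows.

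For the reverse direction, suppose $\sqrt{I} = k[x_1,\dots,x_n]$. Then in particular $1 \in \sqrt{I}$, so by definition of the radical there exists $n \in \N$ with $1^n \in I$. But $1^n = 1$, so $1 \in I$, which forces $I = k[x_1,\dots,x_n]$.

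There is essentially no obstacle here; the proposition is a one-line consequence of the definition once one observes the containment $I \subseteq \sqrt{I}$ and uses the fact that $1$ is idempotent. The only reason to state it as a separate proposition is presumably to be invoked later (for instance, together with the lemmas to pass between ideals and their radicals when verifying nontriviality).
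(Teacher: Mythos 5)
Your proof is correct and matches the paper's approach: the reverse implication is argued exactly as in the text ($1 \in \sqrt{I}$ gives $1 = 1^n \in I$), and you additionally spell out the forward implication via $I \subseteq \sqrt{I}$, which the paper declines to write out because it is immediate.
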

\begin{proof}
We prove only the '$\Longleftarrow$' implication of the proposition. 
Let $1\in \sqrt{I}$. So, $1=1^n\in I$ and, consequently, $I=k[x_1,\dots,x_n]$. 
\end{proof}
\begin{corollary}
Let $k$ be an algebraically closed field, $f\in k[x_1]$, $G\subset k[x_1,\dots,x_n]$. 
Suppose, that
$\langle f,G\rangle\neq k[x_1,\dots,x_n]$. Then there exists $a\in k$, $f(a)=0$,  such that
$\langle (x_1-a),G \rangle\neq k[x_1,\dots,x_n]$.
\end{corollary}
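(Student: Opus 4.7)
The strategy is to factor $f$ using algebraic closure, argue by contradiction using the product formula (\ref{eq_intersec}) to assemble the ideal $\langle f, G\rangle$ from the ideals $\langle(x_1-a),G\rangle$, and invoke the radical proposition to handle the multiplicities.

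First, since $k$ is algebraically closed, I would write $f = c\prod_{j=1}^{k}(x_1-a_j)^{c_j}$ with $c\in k^\times$ and distinct roots $a_1,\dots,a_k\in k$. Note that the constant $c$ does not affect the ideal, so (\ref{eq_intersec}) gives
\[
\langle f, G\rangle \;=\; \bigcap_{j=1}^{k}\langle (x_1-a_j)^{c_j}, G\rangle.
\]
Now I argue by contradiction: assume $\langle (x_1-a_j), G\rangle = k[x_1,\dots,x_n]$ for every $j$. The plan is to boost each such equality up to $\langle (x_1-a_j)^{c_j}, G\rangle = k[x_1,\dots,x_n]$ and then intersect.

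The key observation for the boost is that the ideal $J_j := \langle (x_1-a_j)^{c_j}, G\rangle$ contains $(x_1-a_j)^{c_j}$, so $(x_1-a_j)\in\sqrt{J_j}$; it also contains all of $G$, so $G\subseteq\sqrt{J_j}$. Thus $\langle (x_1-a_j), G\rangle\subseteq\sqrt{J_j}$. Under our contradiction hypothesis, $1\in\sqrt{J_j}$, so $\sqrt{J_j}=k[x_1,\dots,x_n]$, and the preceding proposition (that $I$ is trivial iff $\sqrt{I}$ is) forces $J_j = k[x_1,\dots,x_n]$ itself.

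Applying this to every root and intersecting via (\ref{eq_intersec}) gives $\langle f, G\rangle = k[x_1,\dots,x_n]$, contradicting the hypothesis. Hence some root $a = a_j$ of $f$ must satisfy $\langle (x_1-a), G\rangle \neq k[x_1,\dots,x_n]$, which is the desired conclusion. The only nontrivial step is the radical boost that lets us pass from multiplicity one to multiplicity $c_j$; everything else is a direct application of the results already established.
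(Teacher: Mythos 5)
Your proof is correct and takes essentially the same approach as the paper: you decompose $\langle f,G\rangle$ via formula~(\ref{eq_intersec}) and use the containment $\langle(x_1-a),G\rangle\subseteq\sqrt{\langle(x_1-a)^d,G\rangle}$ together with the radical proposition to pass from multiplicity $d$ to multiplicity $1$. The paper phrases it directly (some factor in the intersection must be proper, then apply the radical step) whereas you phrase it by contradiction, but the content is identical.
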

\begin{proof}
Let $\langle f,G\rangle\neq k[x_1,\dots,x_n]$. By formula~\ref{eq_intersec} 
$\langle (x_1-a)^d,G\rangle\neq k[x_1,\dots,x_n]$ for some $a$, $f(a)=0$ and $d\in\N$. 
 Clearly,  $\langle (x_1-a),G \rangle\subset\sqrt{\langle (x_1-a)^d,G \rangle}$.
\end{proof}
Now Lemma~\ref{lemma_base1} follows due to  
$k[x_1,\dots,x_n]/\langle (x_1-a),G \rangle\sim k[x_2,\dots,x_n]/\ev{a}(\langle G \rangle)$ so,
$\langle (x_1-a),G \rangle\neq k[x_1,\dots,x_n]$ if and only
if $\ev{a}(\langle (x_1-a),G \rangle)\neq k[x_2,\dots,x_n]$. 

\section{Proof of Lemma~\ref{lemma_base2}}
Consider 
$k[x_1,\dots,x_n]$ as $k[x_1][x_2,\dots,x_n]$. So, now the polynomials has $x_2,\dots,x_n$ as 
the variables and $k[x_1]$ as a ring of coefficients. 
Let $\Gamma$ be a finite strong Gr\"obner basis for an ideal $I\subset k[x_1][x_2,\dots,x_n]$. 
Let $q\in k[x_1]$ be the product of leading coefficients of all $g\in\Gamma$.  
If $q(a)\neq 0$ then $\ev{a}(\Gamma)$ is a Gr\"obner basis of $\ev{a}(I)$ by 
Proposition~\ref{prop_morphism}. Now, $\Gamma\cap k[x_1]\subseteq I\cap k[x_1]=\emptyset$ 
and, consequently, 
$\ev{a}(\Gamma)\cap k=\emptyset$. 
The Lemma~\ref{lemma_base2} follows by Propositions~\ref{prop_Groebner_basic}. 

{\bf Acknowledgment.} The author is grateful to David Cox for pointing out an error 
in the previous version. 
The author is also grateful to J.M. Almira, L. Robbiano and E. Gordon for useful  observations.

{\it Instituto de Investigaci{\'o}n en Comunicaci\'on {\'O}ptica   
          Universidad Aut{\'o}noma de San Luis Potos{\'i} 

         Av. Karakorum 1470, Lomas 4a 78210
          San Luis Potosi, Mexico

e-mail:glebsky@cactus.iico.uaslp.mx}
\end{document}